\documentclass[11pt]{article}

\usepackage[margin=1.1in]{geometry}
\usepackage{amsmath,amssymb,amsthm,mathtools}
\usepackage{enumitem}
\usepackage{hyperref}

\hypersetup{
    colorlinks=true,
    linkcolor=blue,
    citecolor=blue,
    urlcolor=blue
}

\newtheorem{theorem}{Theorem}[section]
\newtheorem{lemma}[theorem]{Lemma}
\newtheorem{proposition}[theorem]{Proposition}

\newtheorem{conjecture}[theorem]{Conjecture}
\theoremstyle{remark}
\newtheorem{remark}[theorem]{Remark}

\newcommand{\R}{\mathbb R}
\newcommand{\Z}{\mathbb Z}
\newcommand{\Q}{\mathbb Q}

\newcommand{\T}{\mathbb R/\mathbb Z}

\newcommand{\eps}{\varepsilon}

\title{Inhomogeneous Approximation by Sums of Roots}
\author{Samuel Korsky}
\date{May 25, 2026}

\begin{document}
\maketitle

\begin{abstract}
\noindent
Let $d\geq 2$ and $k\geq 1$ be fixed. We prove that, for every $\eps>0$ and every real $\beta$, there exist integers $1\leq b_1,\ldots,b_k\leq N$ such that
\[
  \left\|\sum_{j=1}^k b_j^{1/d}-\beta\right\|
  \ll_{d,k,\eps} N^{-k/d+\eps}.
\]
The proof combines Schmidt's Subspace Theorem with an explicit inhomogeneous transference argument. This improves Iyer's (2025) higher-root exponent $(k-d+1)/d^2$, and also the analogous $d$-ary full-basis exponent away from the cases where $k+1$ is a power of $d$, at the cost of ineffectivity. We also record a conjectural uniform exponent $k-1/d$. In the square-root case $d=2$, we give explicit integer-target constructions for $k=2,3,4$ attaining this conjectural value.
\end{abstract}

\section{Introduction}

For a real number $x$, write $\|x\|$ for the distance from $x$ to the nearest integer. Iyer \cite{Iyer2024} studied sums of square roots modulo one and proved, among other things, that for every fixed $k$ and every sufficiently large $N$ there are integers $1\leq a_1,\ldots,a_k\leq N$ such that
\[
  0<\left\|\sum_{j=1}^k \sqrt{a_j}\right\| \ll_k N^{-k/2}.
\]
For arbitrary targets $\alpha\in\R$, Iyer proved that there are integers $1\leq b_1,\ldots,b_k\leq N$ with
\[
  \left\|\sum_{j=1}^k\sqrt{b_j}-\alpha\right\|
  \ll_k N^{-\gamma_k},
  \qquad
  \gamma_k=2^{\lfloor \log_2(k+1)\rfloor-1}-\frac12\geq \frac{k-1}{4}.
\]
This exponent is $k/2$ precisely when $k+1$ is a power of two. The dyadic feature comes from using the full nonconstant basis of a multiquadratic field.

\bigskip
\noindent
Iyer's later work on $\theta$-powers extends this framework to higher roots \cite{Iyer2025}. In particular, for $d$-th roots it gives an effective inhomogeneous exponent of size at least
\[
  \frac{k-d+1}{d^2},
\]
and, more sharply, a $d$-ary full-basis exponent
\[
  \gamma_{k,d}:=\frac{d^{\lfloor\log_d(k+1)\rfloor}-1}{d}.
\]
This exponent is $k/d$ precisely when $k+1$ is a power of $d$.

\bigskip
\noindent
The purpose of this note is to record a different proof of an inhomogeneous exponent, using algebraic Diophantine approximation rather than the full radical basis.

\paragraph{Relation to prior work.}
The integer-target problem for sums of square roots has been studied from both Diophantine and computational perspectives. Steinerberger \cite{Steinerberger2024} proved that, for some absolute constant $c>0$, one can make a nonzero $k$-term square-root sum $\ll_k N^{-c k^{1/3}}$ from integers at most $N$. Iyer \cite{Iyer2024} improved the integer-target exponent to $k/2$ and proved the inhomogeneous exponent $\gamma_k$ recalled above. Iyer's later work on $\theta$-powers extends the multiquadratic construction to higher roots and related power sums \cite{Iyer2025}. Earlier work of Qian and Wang \cite{QianWang2006}, Cheng and Li \cite{ChengLi2011}, and others studies the minimum nonzero difference between two signed sums of square roots, a problem arising in the complexity of comparing square-root sums. Dubickas \cite{Dubickas2024} recently considered approximate equality for two sums of $m$th roots, including modulo-one variants. The present note concerns a different but related problem: positive sums of exactly $k$ $d$-th roots approximating an arbitrary prescribed residue class modulo one.

\begin{theorem}[Main theorem]\label{thm:main}
Fix integers $d\geq 2$ and $k\geq 1$. For every $\eps>0$ there is a constant $C_{d,k,\eps}>0$ such that, for every $N\geq 2$ and every $\beta\in\R$, there exist integers
\[
  1\leq b_1,\ldots,b_k\leq N
\]
with
\[
  \left\|\sum_{j=1}^k b_j^{1/d}-\beta\right\|
  \leq C_{d,k,\eps} N^{-k/d+\eps}.
\]
The constant obtained by this proof is ineffective.
\end{theorem}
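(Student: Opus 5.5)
The plan is to reduce the statement to an inhomogeneous linear-form problem with algebraic coefficients, and to solve that problem by transference from a homogeneous bound given by Schmidt's Subspace Theorem.

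\textbf{Reduction to a linear form.} Fix $k$ distinct $d$-th-power-free integers $c_1,\ldots,c_k\geq 2$, for instance distinct primes, and put $\theta_j=c_j^{1/d}$. The numbers $1,\theta_1,\ldots,\theta_k$ are then linearly independent over $\Q$. This is the classical independence of radicals (Besicovitch), or it follows from Kummer theory. We look for solutions of the form $b_j=c_j a_j^d$, so that $b_j^{1/d}=a_j\theta_j$. Set $Q=\lfloor (N/\max_j c_j)^{1/d}\rfloor\asymp_{d,k} N^{1/d}$. The constraint $1\leq b_j\leq N$ then holds whenever $1\leq a_j\leq Q$. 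Hence it suffices to prove the following: for every $\beta$ there are integers $a_j\in[1,Q]$ with
\[
\Bigl\|\sum_j a_j\theta_j-\beta\Bigr\|\ll Q^{-k+\eps}.
\]
For positivity, write $a_j=\lceil Q/2\rceil+x_j$ with $|x_j|\leq Q/3$ and absorb $\sum_j\lceil Q/2\rceil\theta_j$ into $\beta$. The task becomes a centred inhomogeneous problem in the box $|x_j|\leq Q/3$.

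\textbf{Homogeneous input.} The dual problem to the linear form $L(x)=\sum_j x_j\theta_j$ is simultaneous approximation to $(\theta_1,\ldots,\theta_k)$. By Schmidt's Subspace Theorem (his simultaneous-approximation corollary for algebraic numbers with $1,\theta_1,\ldots,\theta_k$ independent over $\Q$), for every $\delta>0$ there is an ineffective $c_\delta>0$ with
\[
\max_j\|y\theta_j\|\geq c_\delta\, y^{-1/k-\delta}\quad\text{for all integers }y\geq 1.
\]
Equivalently, the linear form satisfies $\|L(x)\|\geq c'_\delta |x|^{-k-\delta}$ for all nonzero $x$. Both statements follow from the Subspace Theorem, and I would quote whichever version the transference step needs.

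\textbf{Transference; the main obstacle.} The heart of the proof is an explicit, uniform-in-$\beta$ inhomogeneous transference. I would use Cassels' transference (\emph{An Introduction to Diophantine Approximation}, Ch.~V, Thm.~XVII style), in the form: if $\max_j\|y\theta_j\|\geq \eta$ for all $1\leq y\leq Y$, then every $\beta$ admits $|x_j|\leq X$ with $\|L(x)-\beta\|\leq \rho$, where $X,\rho$ are explicit in $Y,\eta$ up to constants depending only on $k$. The argument goes through Mahler's compound and adjoint bodies: the homogeneous lower bound forces the successive minima of the relevant convex body to be balanced, and Minkowski's second theorem then shows that the last minimum is $\ll 1$, which covers every $\beta$. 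Take $\eta=c_\delta Y^{-1/k-\delta}$ with $Y\asymp Q^{k(1+O(\delta))}$. The resulting parameters are $X\leq Q/3$ and $\rho\ll Q^{-k+O(\delta)}$. Choosing $\delta$ small in terms of $\eps$ gives $\rho\ll N^{-k/d+\eps}$.

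The delicate points are all in this step. First, the exponents must lose only $O(\delta)$; this works because the dual exponent $1/k$ is exactly the Dirichlet exponent, so the transference is essentially sharp in this case. Second, the constants must not depend on $\beta$. Third, since the Subspace Theorem gives no effective $c_\delta$, the final constant $C_{d,k,\eps}$ is ineffective, which is consistent with the statement. If I cannot cite the needed transference in exactly the required form, I would reprove it directly. That direct route takes a reduced basis of the lattice $\{(x,\,Q^{k}(L(x)-m))\}$ in $\R^{k+1}$, uses the Subspace bound to show that all successive minima are $\asymp Q^{O(\delta)}$, and concludes that the covering radius is $\ll Q^{O(\delta)}$.
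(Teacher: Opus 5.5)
Your proposal is correct and follows essentially the paper's route: the same sparse radicands $b_j=c_ja_j^d$ with centred coefficients, the Subspace-theorem bound $\max_j\|y\theta_j\|\gg y^{-1/k-\delta}$, and a Mahler-type inhomogeneous transference. The paper derives its transference from the covering-radius/dual-first-minimum inequality, whereas Cassels' Chapter~V theorem gives the same $Q^{-k+O(\delta)}$ once you track its loss factor $h=Y^{-1}\eta^{-k}\asymp Y^{k\delta}$. The only slip is in your fallback argument: the lattice must be normalised as $\{(x/Q,\,Q^{k}(L(x)-m))\}$, of covolume $1$, for all successive minima to be $Q^{O(\delta)}$; with that normalisation, Minkowski's second theorem and the primal bound $\|L(x)\|\gg|x|^{-k-\delta}$ do give covering radius $\ll Q^{O(\delta)}$.
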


\smallskip
\noindent
Theorem~\ref{thm:main} gives the exponent $k/d-\eps$. In particular, it improves the exponent
\[
  \frac{k-d+1}{d^2}
\]
appearing in Iyer's higher-root result, after choosing
\[
  0<\eps<
  \frac{k}{d}-\frac{k-d+1}{d^2}
  =
  \frac{(d-1)(k+1)}{d^2}.
\]
It also improves the $d$-ary full-basis exponent $\gamma_{k,d}$ whenever $k+1$ is not a power of $d$, after taking $\eps<k/d-\gamma_{k,d}$. When $k+1$ is a power of $d$, Iyer's effective construction gives the endpoint exponent $k/d$, while Theorem~\ref{thm:main} loses an arbitrary $\eps$ and is ineffective.

\bigskip
\noindent
The proof has two ingredients. First, Schmidt's Subspace Theorem gives the essentially optimal dual lower bound
\[
  \max_{1\leq i\leq k}\|h p_i^{1/d}\|
  \gg_{d,k,\eta} |h|^{-1/k-\eta}
  \qquad(h\in\Z\setminus\{0\}),
\]
where $p_1,\ldots,p_k$ are distinct primes. Second, an explicit inhomogeneous transference lemma turns this dual bound into
\[
  \left\|q_1p_1^{1/d}+\cdots+q_kp_k^{1/d}-\beta\right\|
  \ll_{d,k,\eps} Q^{-k+\eps}
\]
with $|q_i|\leq Q$. After shifting the coefficients to be positive and writing $c_i p_i^{1/d}=(p_i c_i^d)^{1/d}$, this becomes a statement about sums of $k$ $d$-th roots with radicands at most $N$.

\bigskip
\noindent
We also record the following conjectural strengthening.

\begin{conjecture}[Uniform exponent]\label{conj:uniform}
For every fixed $d\geq 2$, $k\geq 1$, and $\eps>0$,
\[
  \sup_{\beta\in\T}
  \min_{1\leq b_1,\ldots,b_k\leq N}
  \left\|\sum_{j=1}^k b_j^{1/d}-\beta\right\|
  \ll_{d,k,\eps} N^{-(k-1/d)+\eps}.
\]
\end{conjecture}

\smallskip
\noindent
For the special target $\beta=0$, exact equality is trivial if all $b_j$ are $d$-th powers. The meaningful integer-target problem is therefore to ask for nonzero closeness to an integer. Define
\[
  g_{k,d}(N):=
  \min_{\substack{1\leq b_1,\ldots,b_k\leq N\\
  \sum_j b_j^{1/d}\notin\Z}}
  \left\|\sum_{j=1}^k b_j^{1/d}\right\|.
\]
The integer-target version of the same conjectural scale is
\[
  g_{k,d}(N)\ll_{d,k,\eps}N^{-(k-1/d)+\eps},
\]
or, more sharply, $g_{k,d}(N)\ll_{d,k} N^{-(k-1/d)}$ for all sufficiently large $N$. The exponent $k-1/d$ is suggested by the sensitivity $(x^{1/d})'\asymp N^{-(d-1)/d}$ near $x\asymp N$, combined with a $(k-1)$-parameter complement heuristic. In Section~\ref{sec:examples}, for $d=2$, we give explicit constructions attaining this exponent for $k=2,3,4$.

\section{Dual Lower Bound}\label{sec:subspace}

We use the following standard form of Schmidt's Subspace Theorem.

\begin{theorem}[Schmidt Subspace Theorem]\label{thm:sst}
Let $L_0,\ldots,L_n$ be linearly independent linear forms in $n+1$ variables with algebraic coefficients. For every $\delta>0$, the integer vectors $X\in\Z^{n+1}$ satisfying
\[
  \prod_{j=0}^n |L_j(X)|\leq H(X)^{-\delta}
\]
lie in finitely many proper rational subspaces of $\Q^{n+1}$. Here $H(X)=\max_i |X_i|$.
\end{theorem}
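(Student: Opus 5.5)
This statement is the classical theorem of Schmidt, and the paper invokes it as a black box rather than reproving it. My plan is therefore to cite it in this form from Schmidt's monograph \emph{Diophantine Approximation} (Lecture Notes in Math.\ 785) or from Bombieri--Gubler, and to check only that the normalization agrees. The check is this: with $H(X)=\max_i|X_i|$ and forms with algebraic coefficients that are linearly independent over $\overline{\Q}$, the inequality $\prod_j|L_j(X)|\leq H(X)^{-\delta}$ is exactly the hypothesis of the standard statement, and its conclusion is that the solutions lie in finitely many proper rational subspaces. If one wanted a self-contained argument, I would follow Schmidt's original strategy in four steps, described below.

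\textbf{Step 1: reformulate in the geometry of numbers.} For each solution $X$, consider the parallelepiped $\Pi(X)=\{Y: |L_j(Y)|\leq |L_j(X)|\}$. Apply Minkowski's second theorem to this family of convex bodies and to their compound bodies. The aim is to show that the successive minima $\lambda_1,\ldots,\lambda_{n+1}$ of a suitably rescaled body satisfy $\lambda_1\leq H(X)^{-\delta'}$. This lets one reduce to the following claim: for each of finitely many ``types'' of solutions, the first $m$ minima vectors span a subspace $S$ whose Plücker coordinates are exceptionally well approximated.

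\textbf{Step 2: Roth-type auxiliary polynomial.} Assume there are infinitely many solutions outside finitely many subspaces. Pick solutions $X^{(1)},\ldots,X^{(m)}$ with rapidly increasing heights. By Siegel's lemma, construct a multihomogeneous polynomial $P$ in $m$ blocks of variables, with integer coefficients of controlled size, vanishing to high weighted index along the forms $L_j$. Taylor expansion and the smallness of the $L_j(X^{(h)})$ then force some derivative of $P$ to vanish at the tuple $(X^{(1)},\ldots,X^{(m)})$ with large index, or else to take a nonzero integer value smaller than $1$, which is impossible.

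\textbf{Step 3: the non-vanishing estimate (main obstacle).} One must show that $P$ cannot have large index at a point made of very well approximating, unrelated lattice points. In Roth's case this is Roth's lemma. Here I would invoke Schmidt's generalization, or more cleanly the Faltings--Wüstholz product theorem: a large index forces the points to lie in a product subvariety of bounded degree. Since the successive subspaces are distinct, this yields a contradiction.

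\textbf{Step 4: combine.} This contradiction shows that all but finitely many solutions lie in finitely many proper subspaces. Induction on $n$, via the Plücker/parametric reduction of Step 1, then finishes the proof.

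For the purposes of this paper I would simply cite the theorem. The only hypothesis to verify at the point of use is the linear independence of the forms, and that verification belongs to the subsequent lemma.
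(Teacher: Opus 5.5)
Your approach agrees with the paper's: the paper also treats the Subspace Theorem as a standard result quoted without proof, so citing Schmidt's \emph{Diophantine Approximation} or Bombieri--Gubler, after checking that the normalization with $H(X)=\max_i|X_i|$ matches, is all that is needed. Your four-step outline of Schmidt's argument is only a heuristic sketch rather than a proof, but nothing in the paper depends on it.
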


\smallskip
\noindent
We apply this only in the following elementary consequence.

\begin{lemma}[Dual lower bound]\label{lem:dual-lower}
Fix $d\geq 2$. Let $p_1,\ldots,p_k$ be distinct primes and put $\theta_i=p_i^{1/d}$. For every $\eta>0$ there is a constant $c=c(d,k,\eta,p_1,\ldots,p_k)>0$ such that
\[
  \max_{1\leq i\leq k}\|h\theta_i\|\geq c |h|^{-1/k-\eta}
  \qquad(h\in\Z\setminus\{0\}).
\]
\end{lemma}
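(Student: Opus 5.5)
We argue by contradiction and apply Theorem~\ref{thm:sst} with $n=k$ to the $k+1$ linear forms in $X=(h,m_1,\ldots,m_k)\in\Z^{k+1}$
\[
  L_0(X)=h,\qquad L_i(X)=h\theta_i-m_i\quad(1\leq i\leq k).
\]
These are linearly independent: their coefficient matrix is triangular with nonzero diagonal entries up to sign. Suppose the conclusion fails for some $\eta>0$. Then there are infinitely many $h\neq0$ with $\max_i\|h\theta_i\|\leq |h|^{-1/k-\eta}$. For each such $h$, let $m_i$ be the nearest integer to $h\theta_i$. Then $H(X)\asymp |h|$ with constants depending on the $\theta_i$, and
\[
  \prod_{j=0}^k|L_j(X)|\leq |h|\cdot |h|^{-1-k\eta}=|h|^{-k\eta}\leq H(X)^{-\delta}
\]
for a suitable $\delta=\delta(k,\eta)>0$, once $|h|$ is large. (Note that $h$ ranges over nonzero integers; replacing $h$ by $-h$ is harmless.) Hence all these $X$ lie in finitely many proper rational subspaces, so some single proper subspace $V$ contains infinitely many of them.

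The main step is to rule this out. Take a nonzero rational relation $a_0h+\sum_i a_im_i=0$ defining a hyperplane containing $V$, with $(a_0,\ldots,a_k)\in\Z^{k+1}\setminus\{0\}$. Substituting $m_i=h\theta_i-L_i(X)$ gives
\[
  h\Bigl(a_0+\sum_i a_i\theta_i\Bigr)=\sum_i a_iL_i(X)\to0 .
\]
Since $|h|\to\infty$ along the infinite family, this forces $a_0+\sum_i a_i\theta_i=0$. We then need the $\Q$-linear independence of $1,p_1^{1/d},\ldots,p_k^{1/d}$ for distinct primes, which would give $a=0$, a contradiction.

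This independence is the expected obstacle. It is standard (Besicovitch), but the paper has not stated it, so we would supply a short argument. Use the Galois/Kummer structure of $K=\Q(\zeta_d,p_1^{1/d},\ldots,p_k^{1/d})$. Each radical $p_i^{1/d}$ is an eigenvector for $\mathrm{Gal}(K/\Q(\zeta_d))$ with character $\chi_i$. The characters $\chi_i$ are nontrivial and pairwise distinct: by Kummer theory this reduces to the fact that $p_i/p_j$ and $p_i$ are not $d$-th powers in $\Q(\zeta_d)$ up to the appropriate roots, which can be checked via ramification or valuation at $p_i$. Eigenvectors with distinct characters, together with $1$ (the trivial character), are linearly independent over $\Q(\zeta_d)$, hence over $\Q$.

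Once independence is in hand, the contradiction shows that only finitely many $h$ violate the inequality $\max_i\|h\theta_i\|\leq |h|^{-1/k-\eta}$. Shrinking $c$ to cover those finitely many exceptions, each of which has $\max_i\|h\theta_i\|>0$ by irrationality of $\theta_1$, yields the lemma. The constant is ineffective, since Theorem~\ref{thm:sst} is.
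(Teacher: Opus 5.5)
Your proposal is correct and follows the paper's proof essentially step for step. You use the same forms $L_0=X_0$, $L_i=\theta_iX_0-X_i$, the same bound $\prod_j|L_j(X)|\le |h|^{-k\eta}$, the Subspace Theorem to put infinitely many points in one rational hyperplane, and the limit $m_i/h\to\theta_i$ to contradict the $\Q$-linear independence of $1,\theta_1,\ldots,\theta_k$. The only differences are that you sketch a Kummer-theoretic proof of that independence where the paper cites Besicovitch, and that you state explicitly that $\max_i\|h\theta_i\|>0$ for the finitely many exceptional $h$, which the paper uses silently when absorbing them into the constant.
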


\begin{proof}
The numbers $1,\theta_1,\ldots,\theta_k$ are linearly independent over $\Q$. This is the standard linear independence theorem for distinct radical monomials, due to Besicovitch \cite{Besicovitch1940}.

\bigskip
\noindent
Suppose first that there are infinitely many nonzero integers $h$ for which
\[
  \max_i\|h\theta_i\|< |h|^{-1/k-\eta}.
\]
For each such $h$, choose integers $m_i$ with
\[
  |h\theta_i-m_i|=\|h\theta_i\|,
\]
and set
\[
  X=(h,m_1,\ldots,m_k)\in\Z^{k+1}.
\]
Then $H(X)\asymp |h|$ along this sequence. Consider the $k+1$ algebraic linear forms
\[
  L_0(X)=X_0,
  \qquad
  L_i(X)=\theta_iX_0-X_i\quad(1\leq i\leq k).
\]
They are linearly independent. Moreover,
\[
  \prod_{i=0}^k |L_i(X)|
  = |h|\prod_{i=1}^k |h\theta_i-m_i|
  \leq |h|\left(\max_i\|h\theta_i\|\right)^k
  < |h|^{-k\eta}.
\]
Since $H(X)\asymp |h|$, the last quantity is $\ll H(X)^{-\delta}$ for, say, $\delta=k\eta/2$ and all sufficiently large $|h|$. By Theorem~\ref{thm:sst}, the corresponding integer points $X$ lie in finitely many proper rational subspaces. Hence infinitely many of them lie in one proper rational hyperplane after passing to a subsequence; that is, there are rational numbers $a_0,\ldots,a_k$, not all zero, with
\[
  a_0h+a_1m_1+\cdots+a_km_k=0
\]
for infinitely many points in the sequence. Dividing by $h$ and using $m_i/h\to\theta_i$, we obtain
\[
  a_0+a_1\theta_1+\cdots+a_k\theta_k=0,
\]
contradicting the linear independence of $1,\theta_1,\ldots,\theta_k$.

\bigskip
\noindent
Therefore all sufficiently large $|h|$ satisfy
\[
  \max_i\|h\theta_i\|\geq |h|^{-1/k-\eta}.
\]
Absorbing the finitely many remaining nonzero $h$ into the constant gives the stated bound.
\end{proof}

\section{Inhomogeneous Transference}\label{sec:transference}

The next lemma is the special transference result needed for Theorem~\ref{thm:main}. General homogeneous/inhomogeneous transference inequalities are developed in Bugeaud--Laurent \cite{BugeaudLaurent2005}; for the present one-form situation we give a direct proof using the following covering transference estimate.

\begin{lemma}[Covering transference]\label{lem:covering-transference}
Let $\Lambda\subset\R^n$ be a full-rank lattice, let $B\subset\R^n$ be a symmetric convex body with nonempty interior, let $B^\circ$ be its polar body, and let $\Lambda^*$ be the dual lattice. Then there is a constant $A_n>0$, depending only on $n$, such that
\[
  \mu(B,\Lambda)\lambda_1(B^\circ,\Lambda^*)\leq A_n.
  \tag{3.1}\label{eq:mahler}
\]
Here $\mu(B,\Lambda)$ denotes the covering radius of $B$ with respect to $\Lambda$, and $\lambda_1(B^\circ,\Lambda^*)$ is the first minimum of $B^\circ$ with respect to $\Lambda^*$. This is a standard form of Mahler's transference principle; see, for example, the transference inequalities of Banaszczyk \cite{Banaszczyk1993}, and also Cassels \cite[Chapter VIII]{Cassels1959}.
\end{lemma}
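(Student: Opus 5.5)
The plan is to derive (3.1) from two standard ingredients: Minkowski's second theorem, and the Mahler (polar-body) relation between the successive minima of $(B,\Lambda)$ and $(B^\circ,\Lambda^*)$. Both are invariant under simultaneous linear changes of variables, since for $T\in GL_n(\R)$ we have $(TB)^\circ=T^{-T}B^\circ$ and $(T\Lambda)^*=T^{-T}\Lambda^*$. So I will pass freely between $(B,\Lambda)$ and $(TB,T\Lambda)$, keeping in mind that $B$ itself need not be a ball.

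\emph{Step 1 (covering radius by the last minimum).} Let $\lambda_1\le\cdots\le\lambda_n$ be the successive minima of $B$ with respect to $\Lambda$. I will show
\[
  \mu(B,\Lambda)\le \tfrac{n}{2}\lambda_n(B,\Lambda).
\]
Choose linearly independent $v_1,\dots,v_n\in\Lambda$ with $v_i\in\lambda_iB$. For $x\in\R^n$, write $x=\sum t_iv_i$ and reduce each $t_i$ modulo $1$ into $[-1/2,1/2]$. Subtracting the resulting element of the sublattice $\bigoplus\Z v_i\subseteq\Lambda$ leaves a vector in $\sum_i \tfrac12\lambda_iB\subseteq \tfrac n2\lambda_n B$, using convexity and symmetry of $B$. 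Hence every translate of $\tfrac n2\lambda_nB$ meets $\Lambda$, which is the claimed bound.

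\emph{Step 2 (Mahler's duality).} I will use the classical inequality
\[
  \lambda_n(B,\Lambda)\,\lambda_1(B^\circ,\Lambda^*)\le n!,
\]
proved as follows. Minkowski's second theorem gives
\[
  \lambda_1\cdots\lambda_n\operatorname{vol}(B)\le 2^n\det\Lambda .
\]
Next, if $u\in\Lambda^*$ is nonzero with $u\in\lambda_1^*B^\circ$, then not all of $\langle u,v_i\rangle$ can vanish, since the $v_i$ span $\R^n$. Some $v_i$ therefore pairs with $u$ to a nonzero integer, giving $1\le|\langle u,v_i\rangle|\le\lambda_1^*\lambda_i$. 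This alone yields only $\lambda_1^*\lambda_i\ge1$, so I need an upper bound instead. For that I will apply Minkowski's first theorem to $B^\circ$ and $\Lambda^*$, and combine it with the Blaschke--Santal\'o/Bourgain--Milman-free lower bound $\operatorname{vol}(B)\operatorname{vol}(B^\circ)\ge 4^n/n!$ (Mahler), together with the standard argument that applies the second theorem to the $(n-1)$-dimensional sections.

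This step is where the real content lies. If it becomes delicate, I will instead invoke Banaszczyk's theorem $\lambda_n(B,\Lambda)\lambda_1(B^\circ,\Lambda^*)\le C n(1+\log n)$ as cited, since the lemma only asks for some constant $A_n$.

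\emph{Step 3 (conclusion).} Combining Steps 1 and 2 gives
\[
  \mu(B,\Lambda)\,\lambda_1(B^\circ,\Lambda^*)\le \tfrac n2\lambda_n(B,\Lambda)\,\lambda_1(B^\circ,\Lambda^*)\le \tfrac n2\,n!=:A_n .
\]

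Step 1 is elementary, so the only nontrivial input is the duality in Step 2. I expect Step 2 to be the main obstacle, and in the write-up I will quote it from Cassels, Chapter VIII, rather than reprove it.
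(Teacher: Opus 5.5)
Your decomposition is correct and goes further than the paper, which gives no argument for Lemma~\ref{lem:covering-transference} and only cites Mahler's transference principle (Cassels, Banaszczyk). Step~1, $\mu(B,\Lambda)\le\frac n2\lambda_n(B,\Lambda)$, is right as written. Mahler's inequality $\lambda_n(B,\Lambda)\lambda_1(B^\circ,\Lambda^*)\le n!$ is a true classical theorem, so quoting it in Step~2 proves the lemma with the explicit constant $A_n=\frac n2\,n!$. The paper's citation is shorter and the literature has better constants. Your route gives an explicit $A_n$ and isolates the one nontrivial input. Lemma~\ref{lem:transference} only needs some $A_n$, so either is enough.

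The self-contained sketch of Step~2, however, would not work as written.
\begin{itemize}
\item The bound $\operatorname{vol}(B)\operatorname{vol}(B^\circ)\ge 4^n/n!$ for arbitrary symmetric convex bodies is Mahler's \emph{conjecture}, open for $n\ge 4$. What Mahler proved elementarily is $4^n/(n!)^2$. The $4^n/n!$ bound does hold, with equality, for boxes, which is all the paper uses, but the lemma is stated in general.
\item Minkowski's first theorem for $(B^\circ,\Lambda^*)$ combined with the second theorem for $(B,\Lambda)$ only bounds $(\lambda_1^*)^n\lambda_1\cdots\lambda_n$. That says nothing about $\lambda_1^*\lambda_n$, because $\lambda_1^*\lambda_i$ can be arbitrarily small for $i<n$. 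For example, take $\Lambda=\operatorname{diag}(\eps,\eps^{-1})\Z^2$ and $B=[-1,1]^2$ with small $\eps$; then $\lambda_1=\lambda_1^*=\eps$.
\end{itemize}

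Either of two arguments repairs Step~2.
\begin{itemize}
\item \emph{Product argument.} Apply Minkowski's second theorem to both pairs to get $\prod_j\lambda_j\lambda_j^*\le 4^n/(\operatorname{vol}B\operatorname{vol}B^\circ)\le (n!)^2$. Then use the pairing observation you dismissed, in its general form $\lambda_{n+1-j}\lambda_j^*\ge 1$: a $j$-dimensional span and an $(n+1-j)$-dimensional span in $\R^n$ cannot be orthogonal. Since every factor $\lambda_{n+1-j}\lambda_j^*$ is at least $1$, this isolates $\lambda_n\lambda_1^*\le (n!)^2$.
\item \emph{Section argument.} This is the argument you allude to, and it gives $n!$ with no volume-product input. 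Let $H=\operatorname{span}(v_1,\dots,v_{n-1})$, $M=\Lambda\cap H$, and let $u$ be a primitive vector of $\Lambda^*\cap H^\perp$, so that $d(M)=|u|\,d(\Lambda)$.
  \begin{itemize}
  \item For $x\in B$ with $\langle u,x\rangle=s>0$, the double cone $\operatorname{conv}(\pm x,\,B\cap H)\subseteq B$ has volume $\frac{2s}{n|u|}\operatorname{vol}_{n-1}(B\cap H)$.
  \item The upper form of Minkowski's second theorem for $(B,\Lambda)$ gives $\lambda_1\cdots\lambda_n\operatorname{vol}(B)\le 2^n d(\Lambda)$.
  \item The lower form for the section, coming from $\operatorname{conv}(\pm v_i/\lambda_i : i<n)\subseteq B\cap H$, gives $\lambda_1\cdots\lambda_{n-1}\operatorname{vol}_{n-1}(B\cap H)\ge 2^{n-1}d(M)/(n-1)!$.
  \item Combining these three facts yields $s\lambda_n\le n!$. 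Hence $u\in\frac{n!}{\lambda_n}B^\circ$, which is exactly $\lambda_1(B^\circ,\Lambda^*)\lambda_n(B,\Lambda)\le n!$.
  \end{itemize}
\end{itemize}
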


\begin{lemma}[Inhomogeneous transference for one linear form]\label{lem:transference}
Let $\theta=(\theta_1,\ldots,\theta_k)\in\R^k$. Suppose that there are constants $c>0$ and $\sigma>0$ such that
\[
  \max_{1\leq i\leq k}\|h\theta_i\|
  \geq c |h|^{-\sigma}
  \qquad(h\in\Z\setminus\{0\}).
  \tag{3.2}\label{eq:dual-assumption}
\]
Then there is a constant $C=C(k,c,\sigma)>0$ such that, for every $Q\geq 1$ and every $\beta\in\R$, there exists $q=(q_1,\ldots,q_k)\in\Z^k$ with
\[
  |q|_\infty\leq Q
\]
and
\[
  \|q\cdot\theta-\beta\|\leq C Q^{-1/\sigma}.
\]
\end{lemma}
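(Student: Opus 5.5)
The plan is to apply Lemma~\ref{lem:covering-transference} in dimension $n=k+1$ to a lattice that encodes the linear form $q\cdot\theta+p$, together with a box that is thin in the last coordinate. Set
\[
  \Lambda=\{(q_1,\ldots,q_k,\ q\cdot\theta+p): q\in\Z^k,\ p\in\Z\}=M\Z^{k+1},
  \qquad
  M=\begin{pmatrix} I_k & 0\\ \theta^{T} & 1\end{pmatrix}.
\]
For parameters $Q\geq1$ and $\delta>0$, let
\[
  B=\{(x,y)\in\R^k\times\R:\ |x|_\infty\leq Q,\ |y|\leq\delta\}.
\]
If $\mu(B,\Lambda)\leq1$, then $\Lambda+B=\R^{k+1}$. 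Applying this to the point $(0,\ldots,0,\beta)$ gives $q\in\Z^k$ and $p\in\Z$ with $|q|_\infty\leq Q$ and $|q\cdot\theta+p-\beta|\leq\delta$. Hence $\|q\cdot\theta-\beta\|\leq\delta$. So it suffices to show $\mu(B,\Lambda)\leq1$ with $\delta=CQ^{-1/\sigma}$. By \eqref{eq:mahler}, this follows once we know $\lambda_1(B^\circ,\Lambda^*)\geq A_{k+1}$.

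The second step computes the dual data. We have $\Lambda^*=M^{-T}\Z^{k+1}$, and
\[
  M^{-T}=\begin{pmatrix} I_k & -\theta\\ 0 & 1\end{pmatrix}.
\]
So the dual vectors are $(m-h\theta,\ h)$ with $m\in\Z^k$ and $h\in\Z$. The polar of the box is the cross-polytope-type body
\[
  B^\circ=\{(u,v):\ Q|u|_1+\delta|v|\leq1\}.
\]
Its gauge is $Q|u|_1+\delta|v|$. Therefore
\[
  \lambda_1(B^\circ,\Lambda^*)=\min_{(m,h)\neq0}\bigl(Q|m-h\theta|_1+\delta|h|\bigr).
\]

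The third step bounds this minimum from below, splitting into two cases.
\begin{itemize}[leftmargin=2em]
\item If $h=0$, then $m\neq0$ and the value is at least $Q$.
\item If $h\neq0$, then $|m-h\theta|_1\geq\max_i\|h\theta_i\|$. By \eqref{eq:dual-assumption}, the value is at least
\[
  Qc|h|^{-\sigma}+\delta|h|\ \geq\ \kappa_\sigma\,(Qc)^{1/(1+\sigma)}\delta^{\sigma/(1+\sigma)},
\]
where we minimized over real $|h|>0$ and $\kappa_\sigma>0$ depends only on $\sigma$.
\end{itemize}
With $\delta=CQ^{-1/\sigma}$, the powers of $Q$ cancel exactly: $Q^{1/(1+\sigma)}\cdot Q^{-1/(1+\sigma)}=1$. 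This leaves the bound $\kappa_\sigma c^{1/(1+\sigma)}C^{\sigma/(1+\sigma)}$.

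We then choose $C$ large enough, depending on $k$, $c$ and $\sigma$, that this quantity is at least $A_{k+1}$. The $h=0$ case needs $Q\geq A_{k+1}$. For $Q<A_{k+1}$ we argue trivially: take $q=0$, and note that $\|\beta\|\leq\frac12\leq CQ^{-1/\sigma}$ once $C\geq\frac12A_{k+1}^{1/\sigma}$.

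The only delicate point is the bookkeeping of the dual lattice and polar body: getting $M^{-T}$ right, and checking that the polar of an $\ell^\infty\times\ell^\infty$ box has the weighted $\ell^1$ gauge. After that, the exponent $1/\sigma$ is forced by balancing the two terms, and the argument is routine.
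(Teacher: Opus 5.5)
Your proposal is correct and follows the paper's proof essentially step for step: the same lattice, the same thin box, the same dual lattice $(m-h\theta,h)$ and weighted $\ell^1$ polar, and the same use of the covering transference inequality, including the separate treatment of $h=0$ when $Q>A_{k+1}$. The only cosmetic difference is that you bound $\lambda_1(B^\circ,\Lambda^*)$ from below directly by minimizing $Qc|h|^{-\sigma}+\delta|h|$ over real $|h|$. The paper instead argues by contradiction, bounding $Q\max_i\|h\theta_i\|$ and $\delta|h|$ separately by $A_{k+1}$; both routes yield $\delta\asymp Q^{-1/\sigma}$.
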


\begin{proof}
Set $n=k+1$ and consider the unimodular lattice
\[
  \Lambda=
  \{(q_1,\ldots,q_k,q\cdot\theta-p):q\in\Z^k,\,p\in\Z\}
  \subset\R^{k+1}.
\]
For parameters $Q\geq 1$ and $\delta>0$, define the box
\[
  B=B(Q,\delta)=[-Q,Q]^k\times[-\delta,\delta].
\]
For a real target $\beta$, let
\[
  z_\beta=(0,\ldots,0,\beta)\in\R^{k+1}.
\]
If $z_\beta\in\Lambda+B$, then there are $q\in\Z^k$ and $p\in\Z$ such that
\[
  |q|_\infty\leq Q,
  \qquad
  |q\cdot\theta-p-\beta|\leq\delta,
\]
and hence $\|q\cdot\theta-\beta\|\leq\delta$.

\bigskip
\noindent
It remains to show that this covering holds when $\delta$ is a suitable multiple of $Q^{-1/\sigma}$. Suppose, to the contrary, that $z_\beta\notin\Lambda+B$. Then $\Lambda+B$ does not cover $\R^{k+1}$, so $\mu(B,\Lambda)>1$. By Lemma~\ref{lem:covering-transference}, $\lambda_1(B^\circ,\Lambda^*)\leq A_{k+1}$. The polar of $B$ is
\[
  B^\circ=
  \left\{(y_1,\ldots,y_k,t):
  Q\sum_{i=1}^k |y_i|+\delta |t|\leq 1
  \right\}.
\]
Therefore there is a nonzero vector $(y,t)\in\Lambda^*$ such that
\[
  Q\sum_{i=1}^k |y_i|+\delta |t|\leq A_{k+1}.
  \tag{3.3}\label{eq:dual-vector-bound}
\]
We now compute the dual lattice. A vector $(y,t)\in\R^k\times\R$ belongs to $\Lambda^*$ exactly when
\[
  y\cdot q+t(q\cdot\theta-p)\in\Z
\]
for every $q\in\Z^k$ and $p\in\Z$. This is equivalent to $t\in\Z$ and $y_i+t\theta_i\in\Z$ for each $i$. Thus
\[
  \Lambda^*=
  \{(m_1-h\theta_1,\ldots,m_k-h\theta_k,h):m\in\Z^k,\ h\in\Z\}.
\]
Consequently \eqref{eq:dual-vector-bound} gives integers $m_1,\ldots,m_k,h$, not all zero, with
\[
  Q\sum_{i=1}^k |m_i-h\theta_i|+\delta |h|\leq A_{k+1}.
  \tag{3.4}\label{eq:h-bound}
\]
After increasing constants we may assume $Q>A_{k+1}$. Then $h\ne0$, since if $h=0$ some $m_i\ne0$ and the left side of \eqref{eq:h-bound} is at least $Q$. Hence
\[
  |h|\leq A_{k+1}\delta^{-1}
  \tag{3.5}\label{eq:h-size}
\]
and
\[
  \max_i\|h\theta_i\|
  \leq \sum_{i=1}^k |m_i-h\theta_i|
  \leq A_{k+1}Q^{-1}.
  \tag{3.6}\label{eq:dual-small}
\]
By the hypothesis \eqref{eq:dual-assumption} and \eqref{eq:h-size},
\[
  \max_i\|h\theta_i\|
  \geq c |h|^{-\sigma}
  \geq c A_{k+1}^{-\sigma}\delta^\sigma.
  \tag{3.7}\label{eq:dual-large}
\]
Combining \eqref{eq:dual-small} and \eqref{eq:dual-large}, failure of the desired approximation implies
\[
  c A_{k+1}^{-\sigma}\delta^\sigma\leq A_{k+1}Q^{-1}.
\]
Equivalently,
\[
  \delta\leq A_{k+1}^{1+1/\sigma}c^{-1/\sigma}Q^{-1/\sigma}.
\]
Thus, if we choose
\[
  \delta=2A_{k+1}^{1+1/\sigma}c^{-1/\sigma}Q^{-1/\sigma},
\]
the failure alternative is impossible. This proves the lemma for $Q>A_{k+1}$, and the remaining bounded range of $Q$ is absorbed by increasing the constant.
\end{proof}

\section{Proof of the Main Theorem}\label{sec:main-proof}

\begin{proof}[Proof of Theorem~\ref{thm:main}]
Choose the first $k$ primes $p_1,\ldots,p_k$ and put $\theta_i=p_i^{1/d}$. Thus all constants depending on these primes depend only on $d$ and $k$. Fix $\eps>0$. Choose $\eta>0$ so small that
\[
  \frac{1}{1/k+\eta}\geq k-d\eps.
\]
By Lemma~\ref{lem:dual-lower}, the vector $\theta=(\theta_1,\ldots,\theta_k)$ satisfies
\[
  \max_i\|h\theta_i\|
  \geq c |h|^{-\sigma},
  \qquad
  \sigma=\frac1k+\eta,
\]
for some $c>0$ and all $h\ne0$. Applying Lemma~\ref{lem:transference}, we get that for every $Q\geq 1$ and every $\xi\in\R$ there is $q\in\Z^k$ with $|q|_\infty\leq Q$ and
\[
  \|q\cdot\theta-\xi\|
  \ll_{d,k,\eps} Q^{-1/\sigma}
  \leq Q^{-k+d\eps}.
  \tag{4.1}\label{eq:inhom-q}
\]

\smallskip
\noindent
Let $P=\max_i p_i$. For $N$ large, set
\[
  Q=\left\lfloor \frac{N^{1/d}}{4P^{1/d}}\right\rfloor,
  \qquad
  T=Q+1.
\]
Apply \eqref{eq:inhom-q} to the target
\[
  \xi=\beta-T(\theta_1+\cdots+\theta_k).
\]
Then there are integers $q_i$ with $|q_i|\leq Q$ such that
\[
  \left\|\sum_{i=1}^k q_i\theta_i-\beta+T\sum_{i=1}^k\theta_i\right\|
  \ll_{d,k,\eps} Q^{-k+d\eps}.
\]
Put
\[
  c_i=T+q_i.
\]
Then $1\leq c_i\leq 2Q+1$. Define
\[
  b_i=p_i c_i^d.
\]
For $N$ sufficiently large, $2Q+1\leq 3Q$, and hence
\[
  b_i\leq P(3Q)^d\leq N.
\]
Moreover,
\[
  \sum_{i=1}^k b_i^{1/d}
  =\sum_{i=1}^k c_i p_i^{1/d}
  =T\sum_{i=1}^k\theta_i+\sum_{i=1}^k q_i\theta_i.
\]
Therefore
\[
  \left\|\sum_{i=1}^k b_i^{1/d}-\beta\right\|
  \ll_{d,k,\eps} Q^{-k+d\eps}
  \ll_{d,k,\eps} N^{-k/d+\eps}.
\]
After increasing the implicit constant to handle the finitely many small values of $N$, the theorem follows.
\end{proof}

\begin{remark}[Effectivity]
The construction of the radicands from the vector $q$ is explicit once $q$ is known. The ineffective part is the lower bound in Lemma~\ref{lem:dual-lower}, inherited from the Subspace Theorem. This is the price paid for replacing Iyer's effective radical-basis norm argument by a general algebraic Diophantine approximation theorem.
\end{remark}

\section{The Conjectural \texorpdfstring{$k-1/d$}{k-1/d} Exponent}\label{sec:examples}

The proof of Theorem~\ref{thm:main} only uses the sparse family
\[
  b_i=p_i c_i^d,
  \qquad c_i\ll N^{1/d},
\]
so each $d$-th root contributes essentially one integer parameter. The conjectural exponent $k-1/d$ would require exploiting more of the local freedom in general radicands. For the integer target, this can be studied by Taylor cancellation.

\bigskip
\noindent
A natural Taylor-cancellation form of the integer-target conjecture is the following.

\begin{conjecture}[Taylor-cancellation form]\label{conj:taylor}
For every $d\geq 2$ and $k\geq 1$, there exist positive integers $A_i$, integers $u_i,v_i$, and a nonzero real constant $\lambda$ such that, for infinitely many integers $M$,
\[
  \sum_{i=1}^k A_i\left((M+u_i)^d+v_i\right)^{1/d}
  =L(M)+\lambda M^{-(dk-1)}+O_{d,k}(M^{-dk}),
\]
where $L(M)$ is an integer-valued linear polynomial in $M$.
Equivalently, after absorbing the weights $A_i$ into the radicands, one obtains
\[
  g_{k,d}(N)\ll_{d,k} N^{-(k-1/d)}
\]
for all sufficiently large $N$.
\end{conjecture}

\smallskip
\noindent
For $d=2$, this predicts the exponent $k-1/2$. We now give explicit constructions for $k=2,3,4$ showing this exponent for nonzero approximation to an integer.

\bigskip
\noindent
Let
\[
  C_t(M):=\sqrt{M^2+t}
\]
and
\[
  P(M):=\sqrt{(M+1)^2+1}+\sqrt{(M-1)^2+1}.
\]
As $M\to\infty$,
\[
  C_t(M)=M+\frac{t}{2M}-\frac{t^2}{8M^3}
  +\frac{t^3}{16M^5}-\frac{5t^4}{128M^7}+O_t(M^{-9}),
  \tag{5.1}\label{eq:C-expansion}
\]
and
\[
  P(M)=2M+\frac1M+\frac{3}{4M^3}-\frac{3}{8M^5}
  -\frac{61}{64M^7}+O(M^{-9}).
  \tag{5.2}\label{eq:P-expansion}
\]

\smallskip
\noindent
Both follow directly from the binomial expansion.

\begin{proposition}[Explicit square-root integer-target examples]\label{prop:examples}
For $k=2,3,4$, there are infinitely many $k$-tuples of positive integers $b_1,\ldots,b_k\ll M^2$ such that
\[
  0<\left\|\sum_{j=1}^k\sqrt{b_j}\right\|
  \ll_k M^{-(2k-1)}.
\]
Consequently, for every sufficiently large $N$,
\[
  g_{k,2}(N)\ll_k N^{-(k-1/2)}
  \qquad(k=2,3,4).
\]
\end{proposition}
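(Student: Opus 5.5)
The plan is to build, for each $k\in\{2,3,4\}$, an explicit combination with positive integer weights of the functions $C_t(M)$ and of $P(M)$, using \eqref{eq:C-expansion} and \eqref{eq:P-expansion}. Each weight is absorbed into a radicand via $A\sqrt{x}=\sqrt{A^2x}$. The combination should satisfy three conditions: it uses exactly $k$ square roots, its linear part is an integer multiple of $M$, and the odd Taylor coefficients of $M^{-1},M^{-3},\ldots,M^{-(2k-3)}$ all cancel while the coefficient of $M^{-(2k-1)}$ does not. Writing $S(M)$ for such a combination, we then get $S(M)=LM+\lambda M^{-(2k-1)}+O(M^{-(2k+1)})$ with $L\in\Z$ and $\lambda\neq0$. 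Hence for all large integers $M$ the quantity $\|S(M)\|=|\lambda|M^{-(2k-1)}+O(M^{-(2k+1)})$ is nonzero and $\ll_k M^{-(2k-1)}$.

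The cancellation conditions are linear in the weights. With weights $x_t$ on $C_t$ and $a$ on $P$, they read $a+\tfrac12\sum x_tt=0$, then $\tfrac34a-\tfrac18\sum x_tt^2=0$, then $-\tfrac38a+\tfrac1{16}\sum x_tt^3=0$.
\begin{itemize}
\item For $k=2$, take $C_1(M)+C_{-1}(M)$, i.e.\ $b=(M^2+1,\,M^2-1)$. The $M^{-1}$ terms cancel and the sum is $2M-\tfrac14M^{-3}+O(M^{-5})$.
\item For $k=3$, pure $C_t$ combinations with positive weights cannot work, since $\sum x_tt^2>0$. So we use $P$. Solving $a=1$, $xs=-2$, $xs^2=6$ gives $s=-3$ and $x=2/3$. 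Clearing denominators gives $3P(M)+2C_{-3}(M)$, with radicands $9((M+1)^2+1)$, $9((M-1)^2+1)$ and $4(M^2-3)$. The linear part is $8M$, and the $M^{-5}$ coefficient is $3(-\tfrac38)-2\cdot\tfrac{27}{16}\cdot(-1)=\tfrac94\neq0$.
\item For $k=4$, we use $P$ together with $C_s$ and $C_t$, imposing $xs+yt=-2$, $xs^2+yt^2=6$, $xs^3+yt^3=-6$. Trying small integer $s$ (the choice $s=-1$ degenerates to $t=0$), the choice $s=-2$ forces $t=3$ through the Newton-type recurrence. This gives $x=6/5$ and $y=2/15$, both positive. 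Scaling by $15$ yields $15P(M)+18C_{-2}(M)+2C_3(M)$, with linear part $50M$. The $M^{-7}$ coefficient is $\frac{-915-720-405}{64}\neq0$.
\end{itemize}
All radicands are $\ll M^2$ and are positive for $M\ge2$.

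For the consequence, given large $N$, choose $M=\lfloor c_k\sqrt N\rfloor$ with $c_k$ small enough that all radicands are at most $N$. The examples then give $g_{k,2}(N)\le\|S(M)\|\ll_k M^{-(2k-1)}\ll_k N^{-(k-1/2)}$. Nonintegrality of $\sum\sqrt{b_j}$ holds because $S(M)$ lies strictly between consecutive integers, which follows from the asymptotic once $M$ is large.

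The main obstacle is the bookkeeping of the expansion coefficients. First, \eqref{eq:P-expansion} and \eqref{eq:C-expansion} must be verified to the needed order. Second, the $k=4$ linear system must have a positive rational solution with integer nodes $s,t$. Third, the surviving coefficient must be checked to be nonzero. I would verify the $k=4$ computation most carefully, including the $M^{-7}$ coefficients $-\tfrac{5t^4}{128}$ at $t=-2$ and $t=3$.
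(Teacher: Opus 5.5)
\textbf{There is a genuine gap: your $k=4$ example does not cancel the $M^{-5}$ term, so it gives only $N^{-5/2}$.} Your approach is the same as the paper's: the same ansatz built from $P$ and $C_t$, and the same $k=2$ and $k=3$ examples. The failure is a sign error. The $M^{-5}$ coefficient of $C_t(M)$ is $+t^3/16$. So the condition you wrote down, $-\tfrac38a+\tfrac1{16}\sum x_tt^3=0$ with $a=1$, forces $xs^3+yt^3=+6$. In the $k=4$ bullet you imposed $-6$ instead, which amounts to using $-t^3/16$.

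Checking your example directly, the $M^{-5}$ coefficient of $15P(M)+18C_{-2}(M)+2C_3(M)$ is
\[
15\left(-\tfrac38\right)+18\cdot\tfrac{(-2)^3}{16}+2\cdot\tfrac{3^3}{16}=-\tfrac{45}{4}\neq0 .
\]
Hence this sum is $50M-\tfrac{45}{4}M^{-5}+O(M^{-7})$, and it yields only $\|S(M)\|\asymp M^{-5}$, i.e. $N^{-5/2}$ instead of the required $N^{-7/2}$. Your careful check of the $M^{-7}$ coefficient is therefore beside the point.

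The same slip appears in your $k=3$ computation. The $M^{-5}$ coefficient of $3P+2C_{-3}$ is $-\tfrac98-\tfrac{27}{8}=-\tfrac92$, not $\tfrac94$. This is harmless there, since the coefficient is still nonzero.

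\textbf{How to repair $k=4$.} With the correct sign, your recurrence $m_3=(s+t)m_2-st\,m_1=6(s+t)+2st$ must equal $+6$. There are two ways to finish.
\begin{itemize}
\item Keeping $s=-2$ forces $t=9$, with weights $x=12/11$ and $y=2/99$. Scaling by $99$ gives exactly the paper's example,
\[
99P(M)+108C_{-2}(M)+2C_9(M)=308M-\tfrac{10791}{16}M^{-7}+O(M^{-9}).
\]
\item The case $s=-1$, which you discarded as degenerate, is not degenerate with the correct sign. It gives $t=3$, $x=3$, $y=1/3$, so that
\[
3P(M)+9C_{-1}(M)+C_3(M)=16M-\tfrac{51}{8}M^{-7}+O(M^{-9}).
\]
The radicands are $9((M\pm1)^2+1)$, $81(M^2-1)$ and $M^2+3$, all positive for $M\geq2$. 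This is a slightly smaller alternative to the paper's choice.
\end{itemize}

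With either repair the proof is complete. The rest of your argument is fine: the choice $M=\lfloor c_k\sqrt N\rfloor$ and the nonintegrality of the sum both go through.
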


\begin{proof}
For $k=2$, take
\[
  S_2(M)=\sqrt{M^2-1}+\sqrt{M^2+1}.
\]
By \eqref{eq:C-expansion},
\[
  S_2(M)=2M-\frac{1}{4M^3}+O(M^{-7}).
\]
Thus $0<\|S_2(M)\|\ll M^{-3}$ for all sufficiently large integers $M$.

\bigskip
\noindent
For $k=3$, take
\[
\begin{aligned}
  S_3(M)
  &:=3P(M)+2C_{-3}(M)\\
  &=\sqrt{9((M+1)^2+1)}
    +\sqrt{9((M-1)^2+1)}
    +\sqrt{4(M^2-3)}.
\end{aligned}
\]
Using \eqref{eq:C-expansion} and \eqref{eq:P-expansion},
\[
  S_3(M)=8M-\frac{9}{2M^5}+O(M^{-7}).
\]
Hence $0<\|S_3(M)\|\ll M^{-5}$.

\bigskip
\noindent
For $k=4$, take
\[
\begin{aligned}
  S_4(M)
  &:=99P(M)+108C_{-2}(M)+2C_9(M)\\
  &=\sqrt{99^2((M+1)^2+1)}
    +\sqrt{99^2((M-1)^2+1)}\\
  &\hspace{0.7cm}
    +\sqrt{108^2(M^2-2)}
    +\sqrt{2^2(M^2+9)}.
\end{aligned}
\]
Again using \eqref{eq:C-expansion} and \eqref{eq:P-expansion}, the coefficients of $M^{-1}$, $M^{-3}$, and $M^{-5}$ cancel:
\[
\begin{aligned}
  M^{-1}:&\quad 99-108+9=0,\\
  M^{-3}:&\quad 99\cdot\frac34+108\cdot\left(-\frac12\right)
  +2\cdot\left(-\frac{81}{8}\right)=0,\\
  M^{-5}:&\quad 99\cdot\left(-\frac38\right)+108\cdot\left(-\frac12\right)
  +2\cdot\frac{729}{16}=0.
\end{aligned}
\]
The first nonzero term is
\[
  99\left(-\frac{61}{64}\right)
  +108\left(-\frac58\right)
  +2\left(-\frac{32805}{128}\right)
  =-\frac{10791}{16}.
\]
Therefore
\[
  S_4(M)=308M-\frac{10791}{16M^7}+O(M^{-9}),
\]
and $0<\|S_4(M)\|\ll M^{-7}$.

\bigskip
\noindent
In all three cases the radicands are positive integers for all sufficiently large integer $M$, and their maximum is $\ll M^2$. Given any sufficiently large radicand bound $N$, choose $M=\lfloor c_k N^{1/2}\rfloor$ with $c_k>0$ small enough that all displayed radicands are at most $N$. This gives the claimed exponent $N^{-(k-1/2)}$ for every sufficiently large $N$.
\end{proof}

\smallskip
\noindent
For general $d$, the same finite cancellation problem is obtained by expanding
\[
  \left((M+u)^d+v\right)^{1/d}
  =M+u+\sum_{n\geq d-1} B_n(u,v)M^{-n}.
\]
Conjecture~\ref{conj:taylor} asks for positive rational weights $c_i$ and rational parameters $u_i,v_i$ such that
\[
  \sum_{i=1}^k c_i B_n(u_i,v_i)=0
  \qquad(d-1\leq n\leq dk-2),
\]
while the coefficient at $n=dk-1$ is nonzero. The examples in Proposition~\ref{prop:examples} solve this finite cancellation problem for $d=2$ and $k=2,3,4$.

\end{document}